\let\csname ver@amsthm.sty\endcsname\relax
\let\theoremstyle\relax
\newtheorem{lemma}{Lemma}
\newtheorem{theorem}    [lemma]{Theorem}
\newtheorem{proposition}[lemma]{Proposition}
\theoremstyle{definition}
\newtheorem*{remark*}          {Remark}
\newtheorem{question}   [lemma]{Question}
\newtheorem*{question*}  {Question}
\newtheorem*{summary}    {Summary}
\newtheorem*{ack}    {Acknowledgement}
\crefname{lemma}{Lemma}{Lemma}
\crefname{claim}{Claim}{Claim}
\crefname{theorem}{Theorem}{Theorem}
\crefname{corollary}{Corollary}{Corollary}
\crefname{fact}{Fact}{Fact}
\crefname{proposition}{Proposition}{Proposition}
\crefname{definition}{definition}{definition}
\crefname{remark}{remark}{remark}
\crefname{question}{question}{question}
\crefname{example}{example}{example}
\crefname{section}{Section}{Section}
\def\cftry#1#2#3{\expandafter\def\csname #1#3\endcsname{{\csname #2\endcsname{#3}}}}
\def\cfH#1{\ifx#1\cfH\else\cftry{H}{mathbb}#1\expandafter\cfH\fi}
\def\cftd#1{\ifx#1\cftd\else\cftry{td}{tilde}#1\expandafter\cftd\fi}
\def\cfcl#1{\ifx#1\cfcl\else\cftry{cl}{mathcal}#1\expandafter\cfcl\fi}
\def\cfkk#1{\ifx#1\cfkk\else\cftry{k}{mathfrak}#1\expandafter\cfkk\fi}
\def\cfht#1{\ifx#1\cfht\else\cftry{ht}{hat}#1\expandafter\cfht\fi}
\def\cful#1{\ifx#1\cful\else\cftry{ul}{underline}#1\expandafter\cful\fi}
\let\closure\overline 
\newcommand{\df}{\coloneqq}
\DeclareMathOperator{\End}{End}
\DeclareMathOperator{\GL}{GL}
\let\DH\@undefined\makeatother 
\DeclareMathOperator{\DH}{DH}
\DeclareMathOperator{\Sym}{Sym}
\newcommand{\pleth}[3]{\operatorname{p}(#1,#2;\,#3)}
\DeclareMathOperator{\lrcoeff}{\operatorname{c}}
\DeclareMathOperator{\im}{im}
\let\tmpdet\det\renewcommand{\det}{\tmpdet\nolimits} 
\let\tmpcirc\circ\renewcommand{\circ}{\mathop{\tmpcirc}}
\def\ifempty#1{\def\@temp{#1}\ifx\@temp\@empty} 
\newcommand{\ifnonempty}[2]{\ifempty{#1}\else#2\fi}
\newcommand{\enclspacing}{}
\newcommand{\cenclose}[7][auto]{%
\ifempty{#1} %
 \ifnonempty{#2}{#2\enclspacing}#3 %
 \ifnonempty{#4}{#7#4#7}#5%
 \ifnonempty{#6}{\enclspacing#6}%
\else\ifthenelse{\equal{#1}{auto}}{%
 \ifthenelse{\equal{#2}{}}{\left.}{\left#2}\enclspacing#3%
 \ifthenelse{\equal{#4}{}}{}{#7\middle #4#7}#5%
 \enclspacing\ifthenelse{\equal{#6}{}}{\right.}{\right#6}
}{
 \ifthenelse{\equal{#2}{}}{}{\csname#1l\endcsname#2}\enclspacing#3%
 \ifthenelse{\equal{#4}{}}{}{#7\csname#1\endcsname#4#7}#5%
 \enclspacing\ifthenelse{\equal{#6}{}}{}{\csname#1r\endcsname#6}
}\fi}
\newcommand{\set}[2][auto]{\enclose[#1]\{{#2}\}}
\newcommand{\abs}[2][auto]{\enclose[#1]|{#2}|}
\newcommand{\gen}[2][auto]{\enclose[#1]\langle{#2}\rangle}
\newcommand{\enclose}[4][auto]{\cenclose[#1]{#2}{#3}{}{}{#4}{}}
\newcommand{\cset}[3][auto]{\cenclose[#1]\{{#2}\lvert{#3}\}\:}
\newcommand{\quot}[3][auto]{\cenclose[#1]{}{#2}{/}{#3}{}{}}
\newcommand{\qq}[3][auto]{#2/\!\!/#3}
\newcommand{\dotter}[3][]{#2#3\ifthenelse{\equal{#1}{}}{}{\widehat{#1}#3}#2}
\newcommand{\pts}[2][]{\dotter[#1]{#2}{\cdots}}
\newcommand{\dts}[2][]{\dotter[#1]{#2}{\ldots}}
\newcommand{\longdashrightarrow}{\mathrel{
\longrightarrow\kern-16pt%
{\color{white}\rule[1.6pt]{1.6pt}{1.6pt}}\kern2pt%
{\color{white}\rule[1.6pt]{1.6pt}{1.6pt}}\kern2pt%
{\color{white}\rule[1.6pt]{1.6pt}{1.6pt}}\kern6pt}}
\newcommand{\cmmt}[3][\circlearrowleft]{\ar@{}[#2]|#3{#1}}
\newcommand{\prar}{\ar@{->>}}
\newcommand{\dtar}{\ar@{..>}}
\newcommand{\dshar}{\ar@{-->}}
\newcommand{\thePolynomial}{P}
\newcommand{\orbitmap}{{\omega}}
\newcommand{\orbitmapq}{\phi}
\newcommand{\orbitmapqn}{{\psi}}
\newcommand{\amsfix}{%
\let\qedold\qedsymbol%
\renewcommand{\qedsymbol}{$\Box$}\qed%
\renewcommand{\qedsymbol}{\qedold}}
\title{A note on normalizations of orbit closures}
\author[J.~Hüttenhain]{Jesko Hüttenhain}
\address{
  Technische Universität Berlin\\
  Straße des 17. Juni 136\\
  10623 Berlin\\
  Germany
}
\email[J.~Hüttenhain]{jesko@math.tu-berlin.de}
\keywords{Geometric complexity theory, birational geometry, classical groups}
\subjclass[2000]{14L24, 13A50}
\thanks{Partially funded by grant \texttt{BU~1371/2-2} of the Deutsche Forschungsgemeinschaft.}
\let\bar\overline 
\newcommand{\normalization}[1]{\widetilde{#1}} 
\renewcommand{\normalization}[1]{\operatorname{N}(\bar{#1})}
\begin{document}
\begin{abstract} 
We give a negative answer to a question by J.M.~Landsberg on the nature of normalizations of orbit closures. A counterexample originates from the study of complex, ternary, cubic forms.
\end{abstract}

\maketitle

\section*{Introduction}

We work over the field of complex numbers and $W$ will always denote a finite-dimensional $\HC$-vector space. Mulmuley and Sohoni \cite{MulSoh01} propose, in their \emph{geometric complexity theory}, to
study the geometry of the orbit closure of certain homogeneous polynomials $P\in\HC[W]$ under the action of $\GL(W)$ by precomposition. Unfortunately, the orbit closures are not normal in the main cases of interest \cite{MR3093509,MR2932001}. When $P$ is a product of linear forms, the normalization of the orbit closure has a representation-theoretic description. This observation motivated J.M.~Landsberg in \cite[Problem~7.19]{La14} to pose the following question:

\begin{question} \label{LandsbergQuestion1} 
\textit{\enquote{Is it true that whenever a $\GL(W)$-orbit closure with reductive stabilizer has an irreducible boundary, the coordinate ring of the normalization of the orbit closure equals the polynomial part of the coordinate ring of the orbit?}}
\end{question}

The remaining undefined terminology will be explained in \cref{partPolynomialPart}.
We show that the orbit of an elliptic curve is an example that answers this question in the negative:

\begin{theorem}
 \label{AnswerToLandsbergQuestion1} Let $W=\HC^3$ and let $\thePolynomial\in\HC[W]_3$ be any form that defines a nonsingular curve in $\HP^2$.
The group $\GL(W)$ acts on $\HC[W]_3$ by precomposition. Let $\Omega$ be the orbit of $\thePolynomial$ under this action, $\bar\Omega$ its Zariski closure and $\nu\colon \normalization\Omega\to \bar\Omega$ the normalization of $\bar\Omega$. 

Then, the stabilizer of $\thePolynomial$ is reductive, the boundary $\partial\Omega\df\bar\Omega\setminus\Omega$ is irreducible and $\HC[\normalization\Omega]$ is not isomorphic to the polynomial part of $\HC[\Omega]$.
\end{theorem}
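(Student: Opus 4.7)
The plan is to verify the three parts of the claim in order. Reductivity of the stabiliser is the easiest: a smooth plane cubic $E = \Zeros(\thePolynomial) \subset \HP^2$ has a finite projective automorphism group (any linear automorphism of $\HP^2$ preserving $E$ preserves the polarisation $\clO_E(1) \simeq 3[O]$ for a flex $O$, yielding an extension of $E[3] \cong (\HZ/3\HZ)^2$ by $\operatorname{Aut}(E,O)$), and the $\GL(W)$-stabiliser $H$ of $\thePolynomial$ itself is a further central extension by $\mu_3$, hence still finite and a fortiori reductive. For irreducibility of $\partial\Omega$ I note that $\dim\bar\Omega = 9$ (as $H$ is finite), so $\bar\Omega$ is a hypersurface in $\HC[W]_3 \simeq \HC^{10}$. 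The plan is then to appeal to the classical invariant theory of ternary cubics: the Aronhold $\SL(W)$-invariants $S,T$ of degrees $4$ and $6$ generate $\HC[\HC[W]_3]^{\SL(W)} = \HC[S,T]$, the discriminant is $\Delta = S^3 - T^2$, and $j = k\,S^3/\Delta$. Setting $c_0 \coloneqq j(\thePolynomial)/k$, the orbit closure $\bar\Omega$ equals $\pi^{-1}(C_0)$, where $\pi \colon \HC[W]_3 \to \HC^2$ is the GIT map $Q \mapsto (S(Q),T(Q))$ and $C_0 \coloneqq \Zeros((1-c_0)s^3 + c_0 t^2)$ is an irreducible planar cuspidal curve through the origin. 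Since $\Omega$ consists of smooth cubics, $\partial\Omega = \pi^{-1}(0) = \Zeros(S,T)$ is the $\SL(W)$-null cone, which is classically irreducible --- it is the closure of the $\GL(W)$-orbit of the cuspidal cubic $y^2z - x^3$, its unique $8$-dimensional orbit.

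The substantive step is distinguishing $\HC[\normalization\Omega]$ from the polynomial part of $\HC[\Omega]$ as $\GL(W)$-modules. Peter--Weyl applied to $\Omega = \GL(W)/H$ gives $\HC[\Omega]_{\mathrm{poly}} = \bigoplus_{\lambda\text{ polynomial}}(V_\lambda^*)^{\oplus \dim V_\lambda^H}$. For the normalisation, the identity $T^2 = c_1 S^3$ valid on $\bar\Omega$ (a rewriting of the defining equation) forces $u \coloneqq T/S \in \HC(\Omega)$ to satisfy the monic equation $u^2 = c_1 S \in \HC[\bar\Omega]$, so $u \in \HC[\normalization\Omega]$. Étale-locally at a generic cuspidal point of $\partial\Omega$, Luna's slice theorem presents $\bar\Omega$ as the pullback of $C_0$ along a transverse slice, so the normalisation is completed in one step: $\HC[\normalization\Omega] = \HC[\bar\Omega] \oplus \HC[\bar\Omega]\cdot u$ with $u$ a $\GL(W)$-semi-invariant of weight $\det^2$. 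Consequently the multiplicity of $V_\lambda^*$ in $\HC[\normalization\Omega]$ is at most $\operatorname{mult}_\lambda\HC[\bar\Omega] + \operatorname{mult}_{\lambda - (2,2,2)}\HC[\bar\Omega]$, whereas in $\HC[\Omega]_{\mathrm{poly}}$ it equals $\dim V_\lambda^H$; I would conclude by pinpointing a polynomial dominant weight $\lambda$ where the first quantity is strictly smaller, the surplus being supplied by the extra $H$-invariants arising from the finite symmetries of the elliptic curve.

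The main obstacle lies precisely in this last comparison. It has two interlocking difficulties: first, rigorously verifying that $\HC[\bar\Omega][u]$ is already normal --- for which the cleanest route is Luna slice analysis at a cuspidal cubic, whose stabiliser is the one-dimensional group $\HG_m \cdot \mu_3$, identifying the transverse slice with the planar cusp $C_0$; and second, producing an explicit polynomial weight $\lambda$ where the multiplicities diverge, which reduces to an equivariant plethysm computation comparing the decomposition of $\HC[\bar\Omega]$ (a quotient of $\Sym(S^3 W)$ by a single degree-$12$ relation) against $H$-invariants in small irreducible $\GL(W)$-representations. Of these, the second is the more subtle: a priori the multiplicity inequality could go either way, so a careful character computation for the specific finite group $H$ is needed.
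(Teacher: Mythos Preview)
Your treatment of reductivity and irreducibility of $\partial\Omega$ is sound and agrees with the paper's (which simply cites the classification of ternary cubics; your description of the boundary as the null cone $\{S=T=0\}$ is equivalent). The gap is in part three, and you have correctly located it yourself: you never carry out the multiplicity comparison. Your scheme --- describe $\HC[\normalization\Omega]$ as $\HC[\bar\Omega][u]$ with $u=T/S$, then compare plethysm coefficients against $\dim \HV(\lambda)^H$ --- could in principle be completed, but as you say, a priori the inequality might go either way, and without an explicit $\lambda$ witnessing the discrepancy the argument is not a proof. (There is also a degenerate case you do not address: for $j=0$ the orbit closure is $\{S=0\}$, already normal, and $u=T/S$ is unavailable; that case would require a separate treatment.)

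The paper bypasses all of this via an abstract criterion. It shows that there is always a $\GL(W)$-equivariant injection $\HC[\normalization\Omega]\hookrightarrow\HC[\Omega]_{\sqsupseteq0}$, obtained by factoring the orbit map $\End(W)\to\bar\Omega$, $a\mapsto P\circ a$, through the GIT quotient $\End(W)/\!\!/H$ and then through the normalisation; and it proves that this injection is an isomorphism if and only if every $a\in\End(W)$ with $P\circ a=0$ satisfies $0\in\overline{Ha}$. The application to the elliptic curve is then one line: take $a$ of rank one with image spanned by any nonzero $w$ on the curve; then $P\circ a=0$, but since $H$ is finite, $\overline{Ha}=Ha$ is a finite set of nonzero endomorphisms and cannot contain $0$. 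No multiplicity is ever computed. Your route, if pushed through, would give more explicit information --- and indeed the paper carries out exactly such a multiplicity comparison for the Fermat cubic in its final section, purely as an illustration --- but for the bare non-isomorphism claim the paper's criterion is far shorter and works uniformly for every smooth cubic.
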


In \cref{partCubic}, we quote classical results about ternary cubics to deduce that the stabilizer of $\thePolynomial$ is reductive and the boundary of its orbit irreducible. The main point of this note is to verify the other claims of \cref{AnswerToLandsbergQuestion1}, which we will do at the end of \cref{partPolynomialPart}. The third section is dedicated to the proof of an auxiliary result that sheds some further light on the context of \Cref{LandsbergQuestion1}. \Cref{aronhold} illustrates \cref{AnswerToLandsbergQuestion1} by example of the Fermat cubic $\thePolynomial=x^3+y^3+z^3$ and its orbit closure, the Aronhold hypersurface.

\begin{ack}
I thank Peter Bürgisser very much for his many helpful comments about this note. I also extend my gratitude to J.M.~Landsberg for his encouragement and several related discussions. Finally, I am grateful to Christian Ikenmeyer for providing Formula \eqref{formula-ikenmeyer} in \cref{aronhold}.
\end{ack}

\section{Elliptic Curves have Irreducible Boundary}
\label{partCubic}

We let $W=\HC^3$ and consider the space $\HC[W]_3$ of ternary cubics.  $\GL(W)$ acts on this space from the right via $\HC[W]_3\times\GL(W)\to\HC[W]_3$, $(Q,g)\mapsto Q\circ g$. For  $Q\in\HC[W]_3$, we denote by $\Omega_Q\df Q\circ\GL(W)$ the orbit of $Q$. We define $\bar\Omega_Q\df\closure{\Omega_Q}$ to be its Zariski closure and set $\partial\Omega_Q\df\bar\Omega_Q\setminus\Omega_Q$. We say that
 $\thePolynomial\in\HC[W]_3$
is \emph{regular} if its (projective) vanishing set is a smooth plane projective curve. This is equivalent to the condition that for every nonzero $w\in W$, there is a partial derivative of $\thePolynomial$ which does not vanish at $w$. The following is classical and well-known:

\begin{proposition} \label{DegeneracyBehaviourOfTernaryCubic} If $\thePolynomial\in\HC[W]_3$ is regular, it has a finite (hence reductive) stabilizer and the boundary $\partial\Omega_\thePolynomial$ is irreducible.
\end{proposition}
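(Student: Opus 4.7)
The plan is to verify the two assertions separately.

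\textit{Finite stabilizer.} The short exact sequence $1 \to \mu_3 \to \operatorname{Stab}_{\GL(W)}(\thePolynomial) \to \operatorname{Stab}_{\operatorname{PGL}(W)}(C) \to 1$, where $C = \Zeros(\thePolynomial) \subset \HP^2$, reduces the claim to finiteness of $\operatorname{Stab}_{\operatorname{PGL}(W)}(C)$. Restriction to $C$ embeds this projective stabilizer injectively into $\operatorname{Aut}(C)$, and for a smooth genus-$1$ curve $C$ the identity component $\operatorname{Aut}(C)^\circ$ is $C$ itself acting by translations after fixing an origin $O$. A translation by $Q \in C$ extends to an element of $\operatorname{PGL}(W)$ preserving the embedded curve if and only if it preserves the degree-$3$ line bundle $\clO_{\HP^2}(1)|_C$, which happens exactly when $Q$ is a $3$-torsion point. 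Combined with the finite group $\operatorname{Aut}(C, O)$, this makes $\operatorname{Stab}_{\operatorname{PGL}(W)}(C)$ finite, and hence $\operatorname{Stab}_{\GL(W)}(\thePolynomial)$ is finite (in particular reductive).

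\textit{Irreducible boundary.} The stabilizer computation gives $\dim\Omega_\thePolynomial = 9$, so $\bar\Omega_\thePolynomial$ is a hypersurface in $\HC[W]_3 \cong \HC^{10}$ and $\dim\partial\Omega_\thePolynomial \le 8$. I would invoke the classical Aronhold invariants $S, T$ of degrees $4$ and $6$, which generate the algebra of $\SL(W)$-invariants on $\HC[W]_3$, together with the identity $\Delta = c_1 S^3 + c_2 T^2$ expressing the discriminant. Since $\bar\Omega_\thePolynomial$ lies in the level set of the ratio $S^3 : T^2$ determined by $j(\thePolynomial)$, and this ratio differs from the one cutting out the discriminant (as $\Delta(\thePolynomial) \ne 0$), the two resulting linear equations in $(S^3, T^2)$ are independent, forcing both $S = 0$ and $T = 0$ on the intersection $\bar\Omega_\thePolynomial \cap \{\Delta = 0\}$. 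Hence $\partial\Omega_\thePolynomial$ is contained in the nullcone $\clN \df \{S = T = 0\} \subset \HC[W]_3$. Conversely, a $1$-parameter curve $(a(\eps), b(\eps)) \to (0,0)$ along which the Weierstrass form $y^2 z - x^3 - axz^2 - bz^3$ has constant $j$-invariant equal to $j(\thePolynomial)$ yields cubics in $\Omega_\thePolynomial$ degenerating to the cuspidal cubic $y^2 z - x^3$, so the cuspidal orbit lies in $\partial\Omega_\thePolynomial$.

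\textit{Main step.} The nullcone $\clN$ is a codimension-$2$ complete intersection and thus Cohen--Macaulay of pure dimension $8$. Any irreducible component is the closure of an $8$-dimensional $\GL(W)$-orbit of unstable ternary cubics, and among such orbits only the cuspidal one has dimension $8$ (the remaining unstable orbits --- conic-plus-tangent, three concurrent lines, line-plus-double-line, triple line --- have strictly lower dimension). Consequently, $\clN$ equals the closure of the cuspidal orbit and is irreducible, whence $\partial\Omega_\thePolynomial = \clN$ is irreducible. The heart of the argument, and its main obstacle, is the invariant-theoretic identification just used: it relies on the classical generation of the $\SL(W)$-invariant ring by $S$ and $T$, the expression of $\Delta$ as an element of $\HC[S, T]$, and the standard orbit classification of singular ternary cubics.
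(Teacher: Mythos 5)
Your proof takes a genuinely different route from the paper. The paper simply cites \cite{MR2035244}: Corollary~1 there gives finiteness of the stabilizer, and the degeneracy diagram in Section~4 there identifies $\partial\Omega_\thePolynomial$ outright as the closure of the orbit of $x^3 - y^2z$, whence irreducibility is immediate. You instead reprove both facts from first principles, which is more instructive but longer.

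The finite-stabilizer argument is correct and complete: the exact sequence $1\to\mu_3\to\operatorname{Stab}_{\GL(W)}(\thePolynomial)\to\operatorname{Stab}_{\operatorname{PGL}(W)}(C)\to1$, the embedding of the projective stabilizer into $\operatorname{Aut}(C)$, and the observation that a translation $t_Q$ satisfies $t_Q^\ast \clO_C(3O)\cong\clO_C(3O)$ exactly when $Q\in C[3]$, together with finiteness of $\operatorname{Aut}(C,O)$, give finiteness.

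The boundary argument is sound in outline, and the reduction $\partial\Omega_\thePolynomial\subseteq\clN$ via the constancy of $[S^3:T^2]$ on $\bar\Omega_\thePolynomial$ together with $\Delta\in\HC[S^3,T^2]$ is a nice way to avoid appealing directly to the degeneracy diagram for that inclusion. However, the assertion that $\clN=\Zeros(S,T)$ is \emph{a priori} a codimension-$2$ complete intersection is stated without justification, and as written this is where your argument is weakest: it is essentially equivalent to what you are trying to prove. It can be repaired, for instance by noting that any common irreducible factor of $S$ and $T$ would be an $\SL(W)$-semiinvariant (hence, as $\SL(W)$ has no nontrivial characters, an $\SL(W)$-invariant) of degree $\le4$, and the invariant ring $\HC[S,T]$ has no such element other than scalar multiples of $1$ and $S$; since $S\nmid T$, the $\gcd$ is trivial and Krull's principal ideal theorem gives pure codimension $2$. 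The dimensions of the unstable orbits and the fact that only the cuspidal one is $8$-dimensional are also asserted rather than argued, though you flag explicitly that you lean on the classical classification. Note also that once that classification (with its closure relations) is in hand, the whole invariant-theoretic detour becomes unnecessary: it already exhibits $\partial\Omega_\thePolynomial$ as the closure of the cuspidal orbit, which is the shortcut the paper takes.
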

\begin{proof} By Corollary~1 of \cite{MR2035244}, the stabilizer of $\thePolynomial$ is finite. Any finite group is reductive by Maschke's Theorem. A complete diagram of the degeneracy behaviour of all ternary cubic forms can be found in Section~4 of \cite{MR2035244}. Choosing coordinates $\HC[W]=\HC[x,y,z]$, the diagram implies that $\partial\Omega_\thePolynomial$ is equal to the orbit closure of the polynomial $Q\df x^3 - y^2z$, regardless of the choice of $\thePolynomial$. Hence, $\partial\Omega_\thePolynomial=\bar\Omega_Q$ is irreducible: It is the closure of the image of the irreducible variety $\GL(W)$ under the regular map $\GL(W)\to\HC[W]_3$, $g\mapsto Q\circ g$.
\end{proof}

\section{The Polynomial Part of a Module}
\label{partPolynomialPart}

In analogy to the case of homogeneous polynomials, we consider right actions of $\GL(W)$ on an arbitrary, finite-dimensional $\HC$-vector space $\HV$. This action will also be denoted by a \enquote{$\circ$}. To explain the notion \emph{polynomial part}, we require a brief recollection of the representation theory of $\GL(W)$, see \cite{Hum98,Kra85} for some textbooks on the subject. The irreducible representations of $\GL(W)\cong\GL_n(\HC)$ are classified by the semigroup 
\[ \Lambda \df \cset{ \lambda\in\HZ^n }{ \lambda_1\pts\ge\lambda_n } \]
of \emph{dominant weights}. We denote by $\HV(\lambda)$ the irreducible $\GL(W)$-module corresponding to the weight $\lambda\in\Lambda$. There is a partial ordering $\sqsubseteq$ on $\Lambda$ defined as follows: We have $\mu\sqsubseteq\lambda$ if and only if $\mu_i\le\lambda_i$ for all $1\le i\le n$. 
The action of $\GL(W)$ on $\HV(\lambda)$ extends to a morphism $\HV(\lambda)\times\End(W)\to\HV(\lambda)$ of complex varieties if and only if $0\sqsubseteq\lambda$, where $0\in\HZ^n$ denotes the zero vector. 

When $\HV$ is any $\GL(W)$-module, we can decompose it as $\HV\cong\bigoplus_{\lambda\in\Lambda} \HV(\lambda)^{\oplus n_\lambda}$ for certain $n_\lambda\in\HN$. We then write 
\[ \HV_{\sqsupseteq0} \df \bigoplus_{\substack{\lambda\in\Lambda\\\lambda\sqsupseteq0}} \HV(\lambda)^{\oplus n_\lambda} \subseteq \HV. \]
and call this the \emph{polynomial part} of $\HV$. We say that $\HV$ is a \emph{polynomial} $\GL(W)$-module if $\HV=\HV_{\sqsupseteq0}$. We can rephrase \Cref{LandsbergQuestion1} as follows:

\begin{question*} 
\textit{\enquote{Let $\HV$ be a polynomial $\GL(W)$-module and $\thePolynomial\in\HV$. Let $\Omega\subseteq\HV$ be the orbit and $H\subseteq\GL(W)$ the stabilizer group of $\thePolynomial$. Assume that $\partial\Omega$ is irreducible and $H$ is reductive. Let $\nu\colon\normalization\Omega\to\bar\Omega$ be the normalization of the orbit closure of $\thePolynomial$. Is there a $\GL(W)$-module isomorphism $\HC[\normalization\Omega]\cong\HC[\Omega]_{\sqsupseteq0}$?}}
\end{question*}

\begin{theorem} \label{NormalizationClassification} Let $\HV$ be a polynomial $\GL(W)$-module, $\thePolynomial\in\HV$ a point with reductive stabilizer $H\subseteq\GL(W)$. Denote by $\Omega=\thePolynomial\circ\GL(W)$ its orbit and by $\nu\colon \normalization\Omega\to\bar\Omega$ the normalization of its orbit closure. 

There is an injective homomorphism $\iota\colon\HC[\normalization\Omega]\hookrightarrow\HC[\Omega]_{\sqsupseteq0}$ of graded $\HC$-algebras and $\GL(W)$-modules. Futhermore, the following statements are equivalent:
\begin{enumerate}
\item\label{NormalizationClassification:norm} The injection $\iota$ is an isomorphism.
\item\label{NormalizationClassification:cond} For all $a\in\End(W)$ with $\thePolynomial\circ a=0$, we have $0\in\closure{Ha}$. 
\end{enumerate}
If either condition is satisfied, we have $\thePolynomial\circ \End(W)=\bar\Omega$.
\end{theorem}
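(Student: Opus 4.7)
The plan is to extend the orbit map to a morphism $\phi \colon \End(W) \to \HV$, $a \mapsto \thePolynomial \circ a$; this is well-defined and algebraic because $\HV$ is a polynomial $\GL(W)$-module. The map $\phi$ is $\GL(W)$-equivariant for the right action of $\GL(W)$ on $\End(W)$ by right multiplication, and $H$-invariant for the left action of $H$ on $\End(W)$ by left multiplication. Since $H$ is reductive, $\phi$ therefore descends to a $\GL(W)$-equivariant morphism $\bar\phi \colon \End(W)/\!\!/H \to \HV$ whose image lies in and is dense in $\bar\Omega$.

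First I would verify that $\bar\phi$ is birational. The open subset $\GL(W) \subseteq \End(W)$ is $H$-saturated because the $H$-orbit of a non-invertible matrix remains non-invertible, and $H$ acts freely on $\GL(W)$ by left multiplication, so $\Omega \cong \GL(W)/H$ embeds as an open subset of $\End(W)/\!\!/H$ on which $\bar\phi$ restricts to the identity. The variety $\End(W)/\!\!/H$ is furthermore normal, as the reductive quotient of a smooth affine variety, and therefore $\HC[\End(W)]^H$ is integrally closed in the common fraction field it shares with $\HC[\bar\Omega]$. By the universal property of normalization, there is an inclusion $\HC[\normalization\Omega] \hookrightarrow \HC[\End(W)]^H$ inside $\HC[\Omega]$. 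To identify $\HC[\End(W)]^H$ with $\HC[\Omega]_{\sqsupseteq 0}$, one uses $\HC[\Omega] = \HC[\GL(W)]^H$ and $\HC[\GL(W)] = \HC[\End(W)][\det^{-1}]$: since $\det^{-1}$ spans a non-polynomial $\GL(W)$-irreducible of highest weight $(-1, \dotsc, -1)$, one has $\HC[\GL(W)]_{\sqsupseteq 0} = \HC[\End(W)]$, and taking the polynomial part commutes with taking reductive invariants because both are direct-summand projections on the $\GL(W)$-isotypic decomposition. This produces the required injection $\iota$.

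For the equivalence, the preceding discussion shows $\iota$ is an isomorphism if and only if $\HC[\End(W)]^H = \HC[\normalization\Omega]$, which by birationality of $\bar\phi$ is equivalent to $\bar\phi$ being finite. The scalar subgroup $\HC^* \subseteq \GL(W)$ endows both $\HC[\bar\Omega]$ and $\HC[\End(W)]^H$ with compatible $\HN$-gradings, and graded Nakayama translates finiteness of $\bar\phi$ into the condition that the fiber $\bar\phi^{-1}(0)$ has finite-dimensional coordinate ring, which in the graded setting collapses to the set-theoretic statement $\bar\phi^{-1}(0) \subseteq \{[0]\}$. Points of $\bar\phi^{-1}(0)$ are in bijection with closed $H$-orbits in $\phi^{-1}(0)$, so this is precisely condition \cref{NormalizationClassification:cond}. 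Finally, if $\bar\phi$ is finite it is proper; its image is then closed and, being dense, equals $\bar\Omega$, and the surjectivity of the quotient map $\End(W) \to \End(W)/\!\!/H$ gives $\thePolynomial \circ \End(W) = \bar\Omega$.

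The hard part will be the translation between finiteness of $\bar\phi$ and the fiber condition: the interplay of $\bar\phi$ with the $\HC^*$-grading and the passage from scheme-theoretic to set-theoretic fibers both require care, and the edge case in which $\HV$ contains trivial $\GL(W)$-summands (so that $\bar\Omega$ need not be a cone with vertex $0$) needs a small separate check.
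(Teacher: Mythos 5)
Your proposal follows the same overall route as the paper: factor the polynomial orbit map $\End(W)\to\bar\Omega$ through the GIT quotient $\qq{\End(W)}{H}$, use normality of that quotient and the universal property of normalization to get the inclusion $\iota$, identify $\HC[\End(W)]^H=\HC[\Omega]_{\sqsupseteq 0}$, and then characterize when $\iota$ is an isomorphism by finiteness of the induced birational map $\qq{\End(W)}{H}\to\bar\Omega$, using the $\HC^\times$-action to convert finiteness into a statement about the fiber over $0$. The substantive difference is in that last step: you invoke graded Nakayama (finite fiber over the irrelevant ideal $\Leftrightarrow$ module-finiteness), while the paper isolates the $\HC^\times$-equivariance into a self-contained lemma (finiteness $\Leftrightarrow$ $\orbitmapq^{-1}(0)=\set{0}$ for equivariant maps of cones with unique fixed points, one direction quoted from Landsberg) plus a separate lemma guaranteeing that $\qq{\End(W)}{H}$ has a unique $\HC^\times$-fixed point. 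Your approach buys a standard commutative-algebra tool in place of two bespoke geometric lemmas; the paper's buys a cleaner separation of the geometry from the algebra. Both require the grading on $\HC[\qq{\End(W)}{H}]$ to be connected, which you leave implicit and the paper proves via the fixed-point lemma.

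Two minor imprecisions worth flagging. First, your justification that $\GL(W)$ is $H$-saturated in $\End(W)$ --- \enquote{the $H$-orbit of a non-invertible matrix remains non-invertible} --- only shows $H$-stability of the complement, not $\pi$-saturation (you would need the closed $H$-orbit in $\overline{Ha}$ to stay inside $\GL(W)$ for $a$ invertible, which is not formal). This is fixable: for birationality it is enough that $\orbitmapq$ is dominant and generically injective, and injectivity on $\pi(\GL(W))$ follows directly from the fact that $\orbitmap$ and $\pi$ both have precisely the left $H$-cosets as fibers over $\GL(W)$; the paper instead cites \cite[27.5.2]{TY05}. Second, your phrase that the polynomial-part projection and taking $H$-invariants are \enquote{both direct-summand projections on the $\GL(W)$-isotypic decomposition} misattributes the reason they commute --- they commute because one lives on the left regular action and the other on the right regular action in the Peter--Weyl decomposition, as the paper states. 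You correctly flag the need to treat trivial $\GL(W)$-summands of $\HV$ separately so that $\bar\Omega$ is a cone, which is exactly the reduction the paper carries out.
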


\Cref{partProof} is dedicated to the proof of this statement. We demonstrate how our main result follows from it:

\begin{proof}[Proof of \Cref{AnswerToLandsbergQuestion1}] By \Cref{DegeneracyBehaviourOfTernaryCubic}, the polynomial $\thePolynomial$ has a reductive stabilizer and its orbit has an irreducible boundary. Let $[w]\in\HP^2$ be any point on the curve defined by $\thePolynomial$, i.e. $w\in W$ is nonzero and $\thePolynomial(w)=0$. Let $a\in\End(W)$ be of rank one such that $\im(a)$ is spanned by $w$. Then, $\thePolynomial\circ a=0$ and since $H$ is a finite group, $\closure{Ha}=Ha$ does not contain the zero map. Hence by \cref{NormalizationClassification}, the two $\GL(W)$-modules $\HC[\normalization\Omega]$ and $\HC[\Omega]_{\sqsupseteq0}$ are not isomorphic.
\end{proof}

\section{Proof of \texorpdfstring{\Cref{NormalizationClassification}}{Theorem~4}}
\label{partProof}

We recall the algebraic Peter-Weyl Theorem, see \cite[27.3.9]{TY05}: 
\begin{align}
\label{PeterWeyl}
 \HC[\GL(W)] &\cong \bigoplus_{\lambda\in\Lambda} \HV(\lambda) \otimes \HV(\lambda)^\ast  
\end{align}
Based on this, we make the following fundamental observation:

\begin{lemma} \label{NormalizationClassificationLemmaEnd} The inclusion $\GL(W)\subseteq\End(W)$ is an open, $\GL(W)$-equivariant immersion of varieties under the operation of $\GL(W)$ acting by multiplication from the left on both affine varieties.
It induces an inclusion of their respective coordinate rings which satisfies $\HC[\End(W)]=\HC[\GL(W)]_{\sqsupseteq0}$. 
\end{lemma}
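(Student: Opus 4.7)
The plan is to handle openness, equivariance and the coordinate ring identification separately, since the first two are essentially formal and the algebraic statement is where the content lies.

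For openness, $\GL(W)$ is the complement in $\End(W)$ of the vanishing locus of $\det \in \HC[\End(W)]$, so the inclusion is open. Equivariance is immediate because a product of invertible matrices is invertible: left multiplication on $\End(W)$ preserves $\GL(W)$ and restricts to the standard left translation on $\GL(W)$. Because $\End(W)$ is irreducible and $\GL(W)$ is a dense open subset, restriction of functions yields a $\GL(W)$-equivariant injective ring homomorphism $\HC[\End(W)] \hookrightarrow \HC[\GL(W)]$.

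The task then reduces to identifying the image with the polynomial part. Using the Peter--Weyl identification \eqref{PeterWeyl}, each summand $\HV(\lambda)\otimes\HV(\lambda)^\ast$ is the space of matrix coefficients $h \mapsto \xi(hv)$ of the representation $\HV(\lambda)$. I would establish the dichotomy that such a matrix coefficient extends to a regular function on $\End(W)$ if and only if $\lambda \sqsupseteq 0$. One direction is precisely the defining property of a polynomial module recalled before the statement: if $\lambda \sqsupseteq 0$ then the action of $\GL(W)$ on $\HV(\lambda)$ extends to a morphism $\End(W) \times \HV(\lambda) \to \HV(\lambda)$, and composing with any linear functional $\xi$ extends every matrix coefficient of $\HV(\lambda)$. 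For the converse, if each $h \mapsto \xi(hv)$ extends to some $\tilde f_{v,\xi} \in \HC[\End(W)]$, then for fixed $v$ the assignment $a \mapsto (\xi \mapsto \tilde f_{v,\xi}(a)) \in \HV(\lambda)^{\ast\ast} = \HV(\lambda)$ extends the orbit map $h \mapsto hv$ regularly to $\End(W)$, producing the required polynomial extension of the representation.

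Summing over $\lambda$, the image of the restriction is $\bigoplus_{\lambda \sqsupseteq 0} \HV(\lambda) \otimes \HV(\lambda)^\ast$. The concluding step, and arguably the main subtlety, is to match this with $\HC[\GL(W)]_{\sqsupseteq 0}$ in the sense of the paper's module-theoretic definition of the polynomial part; one has to unwind the $\GL(W) \times \GL(W)$-equivariance of \eqref{PeterWeyl} and check that, under the $\GL(W)$-action induced by left multiplication, each Peter--Weyl summand $\HV(\lambda) \otimes \HV(\lambda)^\ast$ is $\HV(\lambda)$-isotypic. This bookkeeping with conventions is the only non-routine point; once it is in place, the image of the restriction coincides on the nose with $\HC[\GL(W)]_{\sqsupseteq 0}$ as a graded $\GL(W)$-submodule.
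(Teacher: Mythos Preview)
Your argument is correct, but it takes a different route from the paper. The paper proves the identity by a direct computation: it invokes the Cauchy-type decomposition
\[
\Sym^d(W\otimes W^\ast)\;\cong\!\!\bigoplus_{\substack{0\sqsubseteq\lambda\in\Lambda\\ \lambda_1+\cdots+\lambda_n=d}} \HV(\lambda)\otimes\HV(\lambda)^\ast
\]
(citing \cite[Formula~(6.5.1)]{La11}), sums over $d$, and compares with the Peter--Weyl decomposition \cref{PeterWeyl}. Your proof instead characterises the image of $\HC[\End(W)]$ inside $\HC[\GL(W)]$ intrinsically: a Peter--Weyl summand $\HV(\lambda)\otimes\HV(\lambda)^\ast$ consists of matrix coefficients of $\HV(\lambda)$, and you show these extend across the boundary $\det=0$ precisely when $\lambda\sqsupseteq0$, using nothing beyond the defining property of polynomial modules recalled just before the lemma.

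The trade-off is that the paper's proof is a two-line citation but imports a non-trivial combinatorial identity, whereas your argument is self-contained and explains \emph{why} exactly the condition $\lambda\sqsupseteq0$ singles out $\HC[\End(W)]$. Your caveat about the final bookkeeping step is well placed: one must check that the $\GL(W)$-module structure used to form $\HC[\GL(W)]_{\sqsupseteq0}$ (in the paper this is the one induced by the \emph{right} multiplication, cf.\ the paragraph after \cref{NormalizationClassification:eq2}) picks out the same summands; this follows because each $\HV(\lambda)\otimes\HV(\lambda)^\ast$ is irreducible for the two-sided action, so the isotypic filtrations for either one-sided action coincide with the Peter--Weyl blocks.
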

\begin{proof} By \cite[Formula (6.5.1)]{La11}, we get the last equality in 
\begin{align*} \HC[\End(W)]_d &\cong \HC[W\otimes W^\ast]_d \cong \Sym^d(W\otimes W^\ast) 
\cong \bigoplus_{\substack{0\sqsubseteq\lambda\in\Lambda\\\lambda_1\pts+\lambda_n=d}} \HV(\lambda) \otimes \HV(\lambda)^\ast.
\intertext{By summing over $d$ and applying \cref{PeterWeyl}, we obtain}
 \HC[\End(W)] &\cong \bigoplus_{0\sqsubseteq\lambda\in\Lambda} \HV(\lambda) \otimes \HV(\lambda)^\ast = \HC[\GL(W)]_{\sqsupseteq0}
\end{align*} 
by the definition of the polynomial part.
\end{proof}

Note that the stabilizer $H$ acts on the variety $\End(W)$ by multiplication from the left. Since $\HV$ is a polynomial $\GL(W)$-module, there is a well-defined morphism $\orbitmap\colon\End(W)\to\bar\Omega$, $a\mapsto\thePolynomial\circ a$. For $h\in H$ and $a\in\End(W)$,
\[ \orbitmap(ha)=\thePolynomial\circ (ha)=\thePolynomial\circ h\circ a=\thePolynomial\circ a=\orbitmap(a). \]
Therefore, $\orbitmap$ is an $H$-invariant morphism. Refer to \cite[27.5.1]{TY05} for the following fact:
Since $H$ is a reductive algebraic group, there is an affine quotient variety $\qq{\End(W)}H$ together with a surjective morphism $\pi\colon\End(W)\to \qq{\End(W)}H$ and $\orbitmap$ factors as a morphism $\orbitmapq\colon\qq{\End(W)}H\to\bar\Omega$ such that the following diagram commutes:
\begin{equation}
 \label{NormalizationClassification:eq1}
 \xymatrix{
 \End(W) \cmmt{dr}{(.3)} \prar[d]_-\pi \ar[r]^-\orbitmap & \bar\Omega \\
 \qq{\End(W)}H \dtar[ur]|-*+{\scriptstyle\orbitmapq} & } 
\end{equation} 
Furthermore, the variety $\qq{\End(W)}H$ is a normal variety because $\End(W)$ is normal \cite[27.5.1]{TY05}.

The morphism $\orbitmapq$ is a birational map. Indeed, $\qq{\End(W)}H$ contains $\qq{\GL(W)}H$ as an open subset \cite[27.5.2]{TY05} and the restriction of $\orbitmapq$ to this open subset is the isomorphism $\qq{\GL(W)}H\cong\Omega$, \cite[25.4.6]{TY05}. Since $\Omega$ is an open subset of its closure \cite[21.4.3]{TY05}, this proves that $\orbitmapq$ is generically one to one.

The normalization $\nu\colon \normalization\Omega\to \bar\Omega$ is a surjective, finite morphism of affine algebraic varieties \cite[Proposition 12.43 and Corollary 12.52]{GW10}. 
By the universal property of the normalization \cite[Corollary 12.45]{GW10} there exists a unique morphism $\orbitmapqn\colon\qq{\End(W)}H\to \normalization\Omega$ which completes \cref{NormalizationClassification:eq1} to a commutative diagram:
\begin{equation}
 \label{NormalizationClassification:eq2}
 \xymatrix{
 \End(W) \cmmt{dr}{(.3)} \prar[d]_-\pi \ar[r]^-\orbitmap & \bar\Omega \\
 \qq{\End(W)}H \ar@{->}[ur]|-*+{\scriptstyle\orbitmapq} \ar[r]_-\orbitmapqn & \normalization\Omega  \prar[u]_-\nu \cmmt{ul}{(.3)} } 
\end{equation} 

The morphism $\orbitmapqn$ is dominant and therefore corresponds to an injective ring homomorphism 
\begin{align*} \HC[\normalization\Omega] &\subseteq\HC[\qq{\End(W)}H] =\HC[\End(W)]^H
=(\HC[\GL(W)]_{\sqsupseteq0})^H,
\intertext{due to \cref{NormalizationClassificationLemmaEnd}. Taking $H$-invariants is with respect to the left action of $H$ on $\HC[\GL(W)]$ and considering polynomial submodules is with respect to the right action of $\GL(W)$ on $\HC[\GL(W)]$, so these two operations commute. Hence,}
\HC[\normalization\Omega] &\subseteq (\HC[\GL(W)]^H)_{\sqsupseteq0} = \HC[\Omega]_{\sqsupseteq0}.
\end{align*}

In other words, the polynomial part of the coordinate ring of the orbit of $\thePolynomial$ is the ring of $H$-invariants in $\HC[\End(W)]$, where $H$ is the stabilizer of $\thePolynomial$. 

\begin{summary} There is a commutative diagram of $\GL(W)$-equivariant inclusions of $\HC$-algebras:
\begin{equation}
\label{NormalizationClassification:eqRings}
\xymatrix@C=0.3em@R=2.4em{
 & \HC[\End(W)] &&& \HC[\bar\Omega] \ar[lll] \ar[d] \\
\HC[\Omega]_{\sqsupseteq0} \ar@{=}[r] & \HC[\End(W)]^H \ar[u] &&& \HC[\normalization\Omega] \ar[lll] }
\end{equation}
Here, $\HC[\bar\Omega]$ and $\HC[\End(W)]^H$ have the same quotient field $\HK$. The inclusion
\[ \HC[\normalization\Omega]\subseteq \HC[\End(W)]^H = \HC[\Omega]_{\sqsupseteq0} \]
is an inclusion of integrally closed subrings of $\HK$. 
(By definition, $\HC[\normalization\Omega]$ is the integral closure of $\HC[\bar\Omega]$ in $\HK$.) 
\end{summary} 

We now show that \eqref{NormalizationClassification:norm} of \cref{NormalizationClassification} implies $\thePolynomial\circ \End(W)=\bar\Omega$. Recall \cref{NormalizationClassification:eq2}. The condition $\HC[\normalization\Omega]=\HC[\Omega]_{\sqsupseteq0}$ holds if and only if the morphism $\orbitmapqn$ is an isomorphism. In this case it follows that $\orbitmapq$ is the normalization of $\bar\Omega$. 
Thus, \cref{NormalizationClassification:norm} implies in particular that $\orbitmapq$ is surjective and therefore $\orbitmap$ is surjective, which means $\thePolynomial\circ \End(W)=\bar\Omega$.

We now ask when the inclusion $\HC[\normalization\Omega]\subseteq\HC[\Omega]_{\sqsupseteq0}$ becomes an equality. We will require an auxiliary lemma for the proof. Recall that the algebraic group $\HC^\times=\GL_1$ acts \emph{polynomially} on a variety $X$ if the action morphism $\HC^\times\times X\to X$ lifts to a morphism $\HC\times X\to X$. We will denote this map by a dot, i.e. $(t,x)\mapsto t.x$.

\begin{lemma} \label{NormalizationClassificationLemmaFinite} Let $X$ and $Y$ be affine varieties, each of them equipped with polynomial $\HC^\times$-actions admitting unique
fixed points $0_X\in X$ and $0_Y\in Y$, respectively. Let $\orbitmapq\colon X\to Y$ be a $\HC^\times$-equivariant morphism. Then,  $\orbitmapq^{-1}(0_Y)=\set{0_X}$ if and only if $\orbitmapq$ is finite.
\end{lemma}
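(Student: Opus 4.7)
The plan is to exploit the dictionary between polynomial $\HC^\times$-actions on affine varieties and $\HN$-gradings on their coordinate rings. Under this correspondence, a polynomial action on $X$ gives $\HC[X] = \bigoplus_{n\ge 0} \HC[X]_n$; the existence of a unique fixed point $0_X$ is equivalent to $\HC[X]_0 = \HC$, and $0_X$ itself corresponds to the irrelevant ideal $\fkm_X \df \bigoplus_{n>0} \HC[X]_n$. The equivariant morphism $\orbitmapq$ translates to a graded $\HC$-algebra homomorphism $\orbitmapq^*\colon \HC[Y] \to \HC[X]$, and the scheme-theoretic fiber over $0_Y$ is cut out by $I \df \orbitmapq^*(\fkm_Y)\cdot\HC[X]$. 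Hilbert's Nullstellensatz then turns the hypothesis $\orbitmapq^{-1}(0_Y) = \set{0_X}$ into the purely algebraic statement $\sqrt{I} = \fkm_X$.

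For the direction \enquote{$\orbitmapq$ finite $\Rightarrow$ fiber equals $\set{0_X}$}, I would first observe that equivariance combined with uniqueness of fixed points forces $\orbitmapq(0_X) = 0_Y$, and that $\orbitmapq^{-1}(0_Y)$ is a $\HC^\times$-invariant subset of $X$. If $\orbitmapq$ is finite, this fiber is a finite set, so every $\HC^\times$-orbit inside it is a single point, hence a fixed point, hence equal to $0_X$. This direction is essentially immediate.

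For the converse, the key step is to argue that $\HC[X]/I$ is a finite-dimensional $\HC$-vector space: it is a Noetherian graded $\HC$-algebra whose irrelevant ideal $\fkm_X/I$ is nilpotent (since $\sqrt{I} = \fkm_X$), and filtering by powers of this ideal shows only finitely many graded components are nonzero and each is finite-dimensional. I would then lift a homogeneous $\HC$-basis of $\HC[X]/I$ to homogeneous elements $g_1,\dots,g_m \in \HC[X]$ and claim that these generate $\HC[X]$ as a module over $\HC[Y]$ via $\orbitmapq^*$. Letting $M \df \HC[Y]g_1 + \cdots + \HC[Y]g_m$, the construction gives $\HC[X] = M + I$; viewing $\HC[X]$ as a $\HC[Y]$-module, the equality $I = \fkm_Y\cdot\HC[X]$ makes the quotient $N \df \HC[X]/M$ into an $\HN$-graded $\HC[Y]$-module satisfying $\fkm_Y\cdot N = N$. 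By the graded Nakayama lemma, applied by inspecting the lowest-degree nonzero homogeneous component, $N = 0$, so $\HC[X] = M$ is finitely generated over $\HC[Y]$ and $\orbitmapq$ is finite.

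The main delicate point is the finite-dimensionality of $\HC[X]/I$ together with the need to work with the \emph{graded} version of Nakayama (the module $N$ has no a priori reason to be finitely generated over $\HC[Y]$, so one cannot use the classical statement); fortunately the graded version requires only that $N$ be $\HN$-graded, which comes for free. The rest is bookkeeping: checking that $\orbitmapq^*$ is graded, that $I$ is the correct ideal of the scheme-theoretic fiber, and that the polynomial $\HC^\times$-action really does force $\HC[X]_0 = \HC$.
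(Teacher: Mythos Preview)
Your proposal is correct. For the direction \enquote{$\orbitmapq$ finite $\Rightarrow$ $\orbitmapq^{-1}(0_Y)=\set{0_X}$}, your argument is identical to the paper's: the fiber is $\HC^\times$-stable and finite, so every orbit in it is a single point, hence a fixed point, hence $0_X$.

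For the converse, the paper does not actually prove anything: it simply cites \cite[Lemma~7.6.3]{La14}. Your graded-algebra argument supplies a self-contained proof of what the paper outsources. The dictionary you set up is accurate (polynomial $\HC^\times$-action $\leftrightarrow$ $\HN$-grading; unique fixed point $\leftrightarrow$ $\HC[X]_0=\HC$; $\orbitmapq^{-1}(0_Y)=\set{0_X}$ $\leftrightarrow$ $\sqrt{I}=\fkm_X$), and the chain \enquote{$\HC[X]/I$ Artinian local with residue field $\HC$ $\Rightarrow$ finite-dimensional $\Rightarrow$ lift a basis $\Rightarrow$ graded Nakayama} is the standard route. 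You are right to flag that the ordinary Nakayama lemma does not apply because $N=\HC[X]/M$ is not known to be finitely generated over $\HC[Y]$; the graded version, which only needs $N$ to be $\HN$-graded with $\fkm_Y$ acting in positive degrees, is exactly what is required here and disposes of $N$ by looking at its lowest nonzero component. So your write-up is in fact more complete than the paper's on this point.
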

\begin{proof} The \enquote{only if} part is \cite[Lemma 7.6.3]{La14}. For the converse, assume that $\orbitmapq$ is finite. Let $x\in X$ be such that $\orbitmapq(x)=0_Y$. Then, $\orbitmapq(t.x)=t.\orbitmapq(x)=t.0_Y=0_Y$ for all $t\in\HC^\times$ and hence, $\HC^\times.x\subseteq\orbitmapq^{-1}(0_Y)$. But $\orbitmapq^{-1}(0_Y)$ is a finite set, therefore $\HC^\times.x$ is finite and irreducible, i.e. a point. This implies $\HC^\times.x=\set{x}$, so $x$ is a fixpoint for the action of $\HC^\times$. It follows that $x=0_X$ by uniqueness of the fixpoint. 
\end{proof}

\cref{NormalizationClassificationLemmaFinite} will be applied to the morphism $\orbitmapq\colon \qq{\End(W)}H\to\bar\Omega$. We therefore study the action of the scalar matrices $\HC^\times\subseteq\GL(W)$ on $\qq{\End(W)}H$ and $\bar\Omega$.  Observe that the morphism $\orbitmapq$ is equivariant with respect to this action. We need to make sure that both varieties have a unique fixpoint in order to make use of \cref{NormalizationClassificationLemmaFinite}.

We first reduce to the case where $\HV$ has a unique fixpoint under the action of all scalar matrices. For this purpose, fix some basis of $W$, so $\GL(W)\cong\GL_n(\HC)$ and let $\HV=\bigoplus_{\lambda\in\HN^n} \HV_\lambda$ be the decomposition of $\HV$ into isotypical components, i.e. $\HV_\lambda$ is a direct sum of irreducible modules of type $\lambda$. Note that the only weights $\lambda$ that appear are in $\HN^n$ because $\HV$ is a polynomial $\GL_n(\HC)$-module. Let $\thePolynomial=\sum_{\lambda\in\HN^n}\thePolynomial_\lambda$ be the corresponding decomposition of $\thePolynomial$, i.e. $\thePolynomial_\lambda\in \HV_\lambda$. Observe that the point $\tilde\thePolynomial \df \thePolynomial - \thePolynomial_0$ has the same stabilizer as $\thePolynomial$, because any element of $\HV_0$ is $\GL(W)$-invariant. Let $\tilde{\HV}\df\bigoplus_{\lambda\ne0} \HV_\lambda$ be the complement of $\HV_0$ in $\HV$. Then, $\Omega=\set{\thePolynomial_0}\times \Omega_{\tilde\thePolynomial} \subseteq \HV_0\times\tilde{\HV}= \HV$ and consequently, 
$\bar\Omega_P =\set{\thePolynomial_0}\times \bar\Omega_{\tilde\thePolynomial} \cong \bar\Omega_{\tilde\thePolynomial}$.
This shows that we may henceforth assume $\HV=\tilde{\HV}$ and $\thePolynomial=\tilde\thePolynomial$. In this situation, the origin $0_\HV\in\HV$ is the only fixpoint under the action of the scalar matrices. Consequently, it is also the only $\HC^\times$-fixpoint in $\bar\Omega$.

On the other hand, $\End(W)$ also has a unique fixpoint with respect to the left action by scalar matrices, namely the zero map which we will denote by $0$. At this point, we require the following lemma to deduce that $\qq{\End(W)}H$ also has a unique fixpoint:

\begin{lemma} \label{NormalizationClassificationLemmaFixpoint} Let $E$ be an affine variety on which $\HC^\times$ acts polynomially with a unique fixpoint $0$. Assume that a reductive group $H$ acts on $E$ from the left such that the actions of $H$ and $\HC^\times$ commute. Then, the quotient $\qq EH$ also has a unique fixpoint under the induced action of $\HC^\times$. 
\end{lemma}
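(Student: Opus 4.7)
The plan is to translate the lemma into non-negatively graded algebra. A polynomial $\HC^\times$-action on an affine variety $X$ is equivalent to a non-negative $\HZ$-grading $\HC[X]=\bigoplus_{d\ge0}\HC[X]_d$ on the coordinate ring, where $\HC[X]_d$ is the $d$-th weight space. The ideal $\HC[X]_{>0}\df\bigoplus_{d>0}\HC[X]_d$ cuts out the $\HC^\times$-fixed-point locus set-theoretically, and the natural quotient map identifies $\HC[X]/\HC[X]_{>0}$ with $\HC[X]_0$. My first task would be to establish the equivalence
\[ X\text{ has a unique }\HC^\times\text{-fixed point} \iff \HC[X]_0=\HC. \]
The direction $\Leftarrow$ is immediate, since then $\HC[X]_{>0}$ is the kernel of an augmentation, hence a maximal ideal whose zero locus is one point. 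For $\Rightarrow$, let $a\in\HC[X]_0$ and let $c\in\HC$ be the value of $a$ at the unique fixed point; then $a-c$ vanishes on the fixed-point locus, so $(a-c)^n\in\HC[X]_{>0}\cap\HC[X]_0=0$ for some $n$, and reducedness of $\HC[X]$ gives $a=c$.

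With this in hand, the lemma becomes formal. Since $H$ commutes with $\HC^\times$, it preserves each weight space $\HC[E]_d$, so the ring of invariants inherits a non-negative grading
\[ \HC[\qq EH]=\HC[E]^H=\bigoplus_{d\ge0}(\HC[E]_d)^H. \]
In particular $\HC[\qq EH]_0=(\HC[E]_0)^H=\HC^H=\HC$, where the middle equality uses the hypothesis from the preceding paragraph. Applying the characterization in reverse, $\qq EH$ has a unique $\HC^\times$-fixed point, and this point is manifestly $\pi(0)$ since $\pi$ is $\HC^\times$-equivariant and $0$ is fixed by $\HC^\times$.

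I do not foresee any real obstacle; once the dictionary between polynomial $\HC^\times$-actions and non-negative gradings is set up, the argument is almost a tautology. The one point worth handling with care is the equivalence \enquote{unique fixed point $\Leftrightarrow$ $\HC[X]_0=\HC$}, whose non-trivial direction rests on the reducedness of coordinate rings of varieties; a secondary sanity check is that the $\HC^\times$-action on $\qq EH$ is again polynomial, which is automatic from the fact that the induced grading is non-negative.
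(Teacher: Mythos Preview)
Your argument is correct, and it is genuinely different from the paper's. You translate the problem into graded commutative algebra: a polynomial $\HC^\times$-action is a non-negative grading, uniqueness of the fixed point becomes the condition $\HC[X]_0=\HC$, and then the result is immediate from $(\HC[E]^H)_0=(\HC[E]_0)^H=\HC^H=\HC$. The paper instead argues geometrically: it first checks that $0$ is $H$-fixed, then for any $\HC^\times$-fixed point $\pi(e)$ in the quotient it uses the polynomial extension of the orbit map $t\mapsto t.e$ to show $0\in\overline{\HC^\times.e}$, and finally invokes the fundamental property of GIT quotients (points whose orbit closures share a closed $H$-orbit have the same image under $\pi$) to conclude $\pi(e)=\pi(0)$.

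Your route is shorter and more formal; once the dictionary is in place the lemma is essentially a tautology about degree-zero pieces of invariant rings. The paper's route keeps the GIT geometry in the foreground, making explicit why the unique closed $H$-orbit $\{0\}$ lies in every fiber over a $\HC^\times$-fixed point; this is the picture that drives the surrounding application to $\End(W)\to\bar\Omega$. Both identify the fixed point as $\pi(0)$. Your one nontrivial step, the equivalence \enquote{unique fixed point $\Leftrightarrow$ $\HC[X]_0=\HC$} via the Nullstellensatz and reducedness, is handled cleanly.
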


The proof of this lemma is slightly technical and will be given afterwards. Using it, we conclude that $\pi(0)$ is the unique fixpoint in $X\df\qq{\End(W)}H$ and $0_\HV$ is the unique fixpoint in $Y\df\bar\Omega$. The morphism $\orbitmapq\colon X\to Y$ now satisfies the conditions of \cref{NormalizationClassificationLemmaFinite}.

We proceed to prove the equivalence of \eqref{NormalizationClassification:norm} and \eqref{NormalizationClassification:cond}.

We now show $\eqref{NormalizationClassification:norm}\Rightarrow\eqref{NormalizationClassification:cond}$. If \eqref{NormalizationClassification:norm} holds, $\orbitmapqn$ is an isomorphism and $\orbitmapq$ is a normalization of $\bar\Omega$. Therefore, $\orbitmapq$ is a finite morphism. By one direction of \cref{NormalizationClassificationLemmaFinite}, this implies that $\orbitmapq^{-1}(0_\HV)=\set{ \pi(0) }$. In other words, $\orbitmapq(\pi(a))=0_\HV$ implies $\pi(a)=\pi(0)$. We have $\thePolynomial\circ a=\orbitmap(a)=\orbitmapq(\pi(a))$, so $\thePolynomial\circ a=0_\HV$ implies $\pi(a)=\pi(0)$, which is the same as saying that the zero map $0$ is contained in the closure of the $H$-orbit of $a$. This is precisely \eqref{NormalizationClassification:cond}.

For the converse implication, we assume \eqref{NormalizationClassification:cond}. For any $a\in\End(W)$, the condition $0_\HV=\orbitmapq(\pi(a))=\orbitmap(a)$ implies $0\in\closure{Ha}$ by \eqref{NormalizationClassification:cond}. By construction of the GIT quotient, this implies $\pi(a)=\pi(0)$ and hence, $\orbitmapq^{-1}(0_\HV)=\set{\pi(0)}$. The other direction of \cref{NormalizationClassificationLemmaFinite} now states that $\orbitmapq$ is a finite morphism. Any finite morphism is integral \cite[Remark 12.10]{GW10}, so $\orbitmapq$ is an integral birational map from a normal variety $\qq{\End(W)}H$ to $\bar\Omega$. By \cite[Proposition 12.44]{GW10}, it follows that it is the normalization of $\bar\Omega$, so $\orbitmapqn$ is an isomorphism.

\begin{proof}[Proof of \cref{NormalizationClassificationLemmaFixpoint}] We first note that $0$ is a fixpoint for the action of $H$ as well. Indeed, for any $h\in H$ and any $t\in\HC^\times$, we have $t.h.0=h.t.0=h.0$ because the actions commute, so $h.0$ is a fixpoint for the action of $\HC^\times$. By uniqueness, this implies $h.0=0$. As $h\in H$ was arbitrary, $0$ is a fixpoint for the action of $H$.

We will denote by $\pi\colon E\to\qq EH$ the quotient morphism. 
Assume now that $x\df\pi(e)\in\qq EH$ is any fixpoint of the action of $\HC^\times$. Observe that 
\[ \set x=\HC^\times.x=\HC^\times.\pi(e)=\pi(\HC^\times.e), \]
so $\HC^\times.e\subseteq\pi^{-1}(x)$. Since the action of $\HC^\times$ is polynomial, the orbit map lifts to a $\HC^\times$-equivariant morphism $\gamma\colon\HC\to E$ with $\gamma(t)=t.e$ for $t\in\HC^\times$. Since
\[ t.\gamma(0)=\gamma(t\cdot0)=\gamma(0) \]
for all $t\in\HC^\times$, it follows that $\gamma(0)$ is a $\HC^\times$-fixpoint in $E$, so $\gamma(0)=0$. This implies that $0\in\closure{\HC^\times.e}$. 
Because $0$ is a fixpoint for the action of $H$, the set $\set{0}\subseteq E$ is a closed $H$-orbit. By the nature of the GIT-quotient, points of $E$ that share a closed $H$-orbit are mapped to the same point in the quotient. In this case, $e$ and $0$ share the closed orbit $\set{0}$ and it follows that $x=\pi(e)=\pi(0)$. Thus, $\pi(0)$ is the only fixpoint of the action of $\HC^\times$ on $\qq EH$.
\end{proof}

\section{Example: The Aronhold Hypersurface}
\label{aronhold} 

As an example we consider the special case of the Fermat cubic: Let $W=\HC^3$, $\HV=\HC[W]_3=\HC[x,y,z]_3$ and $\thePolynomial\df x^3+y^3+z^3\in\HV$. By \Cref{NormalizationClassification,AnswerToLandsbergQuestion1}, we know that the quotient $\HC[\Omega]_{\sqsupseteq0}/\HC[\normalization{\Omega}]$ exists and that it is a nontrivial $\GL_3$-module, so it decomposes as a direct sum of irreducible $\GL_3$-modules. We will explicitly compute some of the corresponding multiplicities.

Note that this is a special case as the orbit closure $\closure\Omega \subseteq\HV$ is a normal variety \cite[Thm.~1.56]{MR1735271}. This simplifies the calculation because we need not determine the normalization of $\closure\Omega$. Note that if $\thePolynomial$ is a generic regular cubic, $\closure\Omega$ is not normal \cite[Cor.~3.17~(1)]{FundInv}.

$\thePolynomial$ defines the elliptic curve with $j$-invariant equal to zero \cite[Prop.~4.4.7 and eq.~(4.5.8)]{MR1255980}. Its orbit closure $\closure\Omega$ is the hypersurface defined by the Aronhold Invariant $\clA\in\HC[\HV]_4$, see \cite[5.31.1]{MR1213725} and \cite[Remark~7.20]{MR1735271}. Thus,
\begin{align*} 
\HC[\normalization{\Omega}]
& =\HC[\overline\Omega]=
\quot{\HC[\HV]}{\gen{\clA}}.
\intertext{%
We write $\Sym^d \Sym^m \HC^3 =\bigoplus_{\lambda\in\Lambda} \HV(\lambda)^{\pleth{d}{m}{\lambda}}$, where $\pleth{d}{m}{\lambda}$ is known as a \emph{Plethysm} coefficient. 
Furthermore, $\clA$ is a \emph{highest weight vector} of weight~$(4,4,4)$ with respect to the action of $\GL(W)\cong\GL_3(\HC)$. This means that the linear span of the $\GL_3$-orbit of $\clA$ is isomorphic to the irreducible $\GL_3$-module $\HV((4,4,4))$.
Thus, $\gen{\clA}_d=\HC[\HV]_{d-4}\cdot\clA=\bigoplus_{\lambda\in\Lambda} \HV(\lambda)^{\pleth{d-4}{3}{\lambda-(4,4,4)}}$ and}%
\HC[\normalization{\Omega}]_d &\cong \bigoplus_{\lambda\in\Lambda} \HV(\lambda)^{b_\lambda}
\end{align*}
where $b_\lambda =\pleth{d}{3}{\lambda}-\pleth{d-4}{3}{\lambda-(4,4,4)}$.
Note that $b_\lambda$ can be computed with the SCHUR software package (\href{http://schur.sourceforge.net/}{http://schur.sourceforge.net/}).

Denoting by $a_\lambda$ the coefficients such that $\HC[\Omega]_{\sqsupseteq0}=\bigoplus_{\lambda\in\Lambda} \HV(\lambda)^{a_\lambda}$, we are interested in the numbers $m_\lambda\df a_\lambda-b_\lambda$ because
\[ 
\HC[\Omega]_{\sqsupseteq0}/\HC[\normalization{\Omega}] = \bigoplus_{\lambda\in\Lambda} \HV(\lambda)^{a_\lambda-b_\lambda}. 
\]

\begin{figure}[t] \small 
\setlength\tabcolsep{2pt}
\newcommand{\thecolumns}{@{}rrrl@{}}
\newcommand{\thespacing}{\;\;}
\begin{tabular}[t]{\thecolumns}
$a_\lambda$&$b_\lambda$&$m_\lambda$&$\lambda$\\
\hline 
$1$ & $1$ & $0$ & $(4, 2, 0)$ \\
$2$ & $1$ & $1$ & $(6, 0, 0)$ \\ 
\hline 
$1$ & $1$ & $0$ & $(4, 4, 1)$ \\
$1$ & $1$ & $0$ & $(5, 2, 2)$ \\
$2$ & $1$ & $1$ & $(6, 3, 0)$ \\
$2$ & $1$ & $1$ & $(7, 2, 0)$ \\
$1$ & $0$ & $1$ & $(8, 1, 0)$ \\
$3$ & $1$ & $2$ & $(9, 0, 0)$ \\
\hline 
$1$ & $1$ & $0$ & $(6, 4, 2)$ \\
$2$ & $1$ & $1$ & $(6, 6, 0)$ \\
$1$ & $1$ & $0$ & $(7, 3, 2)$ \\
$1$ & $1$ & $0$ & $(7, 4, 1)$ \\
$1$ & $0$ & $1$ & $(7, 5, 0)$ \\
$1$ & $1$ & $0$ & $(8, 2, 2)$ \\
$1$ & $0$ & $1$ & $(8, 3, 1)$ \\
$3$ & $1$ & $2$ & $(8, 4, 0)$ \\
$1$ & $0$ & $1$ & $(9, 2, 1)$ \\
$3$ & $1$ & $2$ & $(9, 3, 0)$ \\
$4$ & $1$ & $3$ & $(10, 2, 0)$ \\
$2$ & $0$ & $2$ & $(11, 1, 0)$ \\
$4$ & $1$ & $3$ & $(12, 0, 0)$ \\
\hline 
$1$ & $1$ & $0$ & $(6, 6, 3)$ \\
$1$ & $1$ & $0$ & $(7, 6, 2)$ \\
$1$ & $1$ & $0$ & $(8, 4, 3)$ \\
$1$ & $1$ & $0$ & $(8, 5, 2)$ \\
$2$ & $1$ & $1$ & $(8, 6, 1)$ \\
$1$ & $0$ & $1$ & $(8, 7, 0)$ \\
$2$ & $2$ & $0$ & $(9, 4, 2)$ \\
$1$ & $0$ & $1$ & $(9, 5, 1)$ \\
$4$ & $1$ & $3$ & $(9, 6, 0)$ \\
$2$ & $1$ & $1$ & $(10, 3, 2)$ \\
$3$ & $1$ & $2$ & $(10, 4, 1)$ \\
$4$ & $1$ & $3$ & $(10, 5, 0)$ \\
$2$ & $1$ & $1$ & $(11, 2, 2)$ \\
$2$ & $0$ & $2$ & $(11, 3, 1)$ \\
$5$ & $1$ & $4$ & $(11, 4, 0)$ \\
$2$ & $0$ & $2$ & $(12, 2, 1)$ \\
$6$ & $1$ & $5$ & $(12, 3, 0)$ \\
$6$ & $1$ & $5$ & $(13, 2, 0)$ \\
$4$ & $0$ & $4$ & $(14, 1, 0)$ \\
$5$ & $1$ & $4$ & $(15, 0, 0)$ \\ 
\hline 
$1$ & $1$ & $0$ & $(6, 6, 6)$ \\
$1$ & $1$ & $0$ & $(8, 6, 4)$ \\
\end{tabular} \thespacing%
\begin{tabular}[t]{\thecolumns}
$a_\lambda$&$b_\lambda$&$m_\lambda$&$\lambda$\\
\hline 
$1$ & $1$ & $0$ & $(8, 8, 2)$ \\
$2$ & $2$ & $0$ & $(9, 6, 3)$ \\
$1$ & $1$ & $0$ & $(9, 7, 2)$ \\
$1$ & $0$ & $1$ & $(9, 8, 1)$ \\
$1$ & $0$ & $1$ & $(9, 9, 0)$ \\
$1$ & $1$ & $0$ & $(10, 4, 4)$ \\
$1$ & $1$ & $0$ & $(10, 5, 3)$ \\
$3$ & $2$ & $1$ & $(10, 6, 2)$ \\
$3$ & $1$ & $2$ & $(10, 7, 1)$ \\
$4$ & $1$ & $3$ & $(10, 8, 0)$ \\
$1$ & $1$ & $0$ & $(11, 4, 3)$ \\
$3$ & $2$ & $1$ & $(11, 5, 2)$ \\
$4$ & $1$ & $3$ & $(11, 6, 1)$ \\
$3$ & $0$ & $3$ & $(11, 7, 0)$ \\
$1$ & $0$ & $1$ & $(12, 3, 3)$ \\
$4$ & $2$ & $2$ & $(12, 4, 2)$ \\
$3$ & $0$ & $3$ & $(12, 5, 1)$ \\
$9$ & $2$ & $7$ & $(12, 6, 0)$ \\
$3$ & $1$ & $2$ & $(13, 3, 2)$ \\
$5$ & $1$ & $4$ & $(13, 4, 1)$ \\
$7$ & $1$ & $6$ & $(13, 5, 0)$ \\
$3$ & $1$ & $2$ & $(14, 2, 2)$ \\
$4$ & $0$ & $4$ & $(14, 3, 1)$ \\
$9$ & $1$ & $8$ & $(14, 4, 0)$ \\
$3$ & $0$ & $3$ & $(15, 2, 1)$ \\
$9$ & $1$ & $8$ & $(15, 3, 0)$ \\
$1$ & $0$ & $1$ & $(16, 1, 1)$ \\
$9$ & $1$ & $8$ & $(16, 2, 0)$ \\
$5$ & $0$ & $5$ & $(17, 1, 0)$ \\
$7$ & $1$ & $6$ & $(18, 0, 0)$ \\
\hline 
$1$ & $1$ & $0$ & $(9, 6, 6)$ \\
$1$ & $1$ & $0$ & $(9, 8, 4)$ \\
$1$ & $1$ & $0$ & $(10, 6, 5)$ \\
$1$ & $1$ & $0$ & $(10, 7, 4)$ \\
$2$ & $2$ & $0$ & $(10, 8, 3)$ \\
$2$ & $1$ & $1$ & $(10, 9, 2)$ \\
$2$ & $1$ & $1$ & $(10, 10, 1)$ \\
$2$ & $2$ & $0$ & $(11, 6, 4)$ \\
$1$ & $1$ & $0$ & $(11, 7, 3)$ \\
$3$ & $2$ & $1$ & $(11, 8, 2)$ \\
$2$ & $0$ & $2$ & $(11, 9, 1)$ \\
$2$ & $0$ & $2$ & $(11, 10, 0)$ \\
$1$ & $1$ & $0$ & $(12, 5, 4)$ \\
\end{tabular} \thespacing
\begin{tabular}[t]{\thecolumns}
$a_\lambda$&$b_\lambda$&$m_\lambda$&$\lambda$\\
\hline 
$4$ & $3$ & $1$ & $(12, 6, 3)$ \\
$4$ & $2$ & $2$ & $(12, 7, 2)$ \\
$5$ & $1$ & $4$ & $(12, 8, 1)$ \\
$6$ & $1$ & $5$ & $(12, 9, 0)$ \\
$1$ & $1$ & $0$ & $(13, 4, 4)$ \\
$2$ & $1$ & $1$ & $(13, 5, 3)$ \\
$6$ & $3$ & $3$ & $(13, 6, 2)$ \\
$5$ & $1$ & $4$ & $(13, 7, 1)$ \\
$8$ & $1$ & $7$ & $(13, 8, 0)$ \\
$3$ & $1$ & $2$ & $(14, 4, 3)$ \\
$5$ & $2$ & $3$ & $(14, 5, 2)$ \\
$8$ & $1$ & $7$ & $(14, 6, 1)$ \\
$9$ & $1$ & $8$ & $(14, 7, 0)$ \\
$1$ & $0$ & $1$ & $(15, 3, 3)$ \\
$6$ & $2$ & $4$ & $(15, 4, 2)$ \\
$6$ & $0$ & $6$ & $(15, 5, 1)$ \\
$13$ & $2$ & $11$ & $(15, 6, 0)$ \\
$5$ & $1$ & $4$ & $(16, 3, 2)$ \\
$8$ & $1$ & $7$ & $(16, 4, 1)$ \\
$12$ & $1$ & $11$ & $(16, 5, 0)$ \\
$4$ & $1$ & $3$ & $(17, 2, 2)$ \\
$6$ & $0$ & $6$ & $(17, 3, 1)$ \\
$13$ & $1$ & $12$ & $(17, 4, 0)$ \\
$5$ & $0$ & $5$ & $(18, 2, 1)$ \\
$13$ & $1$ & $12$ & $(18, 3, 0)$ \\
$1$ & $0$ & $1$ & $(19, 1, 1)$ \\
$12$ & $1$ & $11$ & $(19, 2, 0)$ \\
$8$ & $0$ & $8$ & $(20, 1, 0)$ \\
$8$ & $1$ & $7$ & $(21, 0, 0)$ \\
\hline 
$1$ & $1$ & $0$ & $(10, 8, 6)$ \\
$1$ & $1$ & $0$ & $(10, 9, 5)$ \\
$1$ & $1$ & $0$ & $(10, 10, 4)$ \\
$1$ & $1$ & $0$ & $(11, 8, 5)$ \\
$1$ & $1$ & $0$ & $(11, 9, 4)$ \\
$1$ & $1$ & $0$ & $(11, 10, 3)$ \\
$2$ & $2$ & $0$ & $(12, 6, 6)$ \\
$1$ & $1$ & $0$ & $(12, 7, 5)$ \\
$3$ & $3$ & $0$ & $(12, 8, 4)$ \\
$3$ & $2$ & $1$ & $(12, 9, 3)$ \\
$4$ & $2$ & $2$ & $(12, 10, 2)$ \\
$2$ & $0$ & $2$ & $(12, 11, 1)$ \\
$4$ & $1$ & $3$ & $(12, 12, 0)$ \\
$2$ & $2$ & $0$ & $(13, 6, 5)$ \\
\end{tabular} \thespacing
\begin{tabular}[t]{\thecolumns}
$a_\lambda$&$b_\lambda$&$m_\lambda$&$\lambda$\\
\hline 
$2$ & $2$ & $0$ & $(13, 7, 4)$ \\
$4$ & $3$ & $1$ & $(13, 8, 3)$ \\
$5$ & $2$ & $3$ & $(13, 9, 2)$ \\
$5$ & $1$ & $4$ & $(13, 10, 1)$ \\
$4$ & $0$ & $4$ & $(13, 11, 0)$ \\
$4$ & $3$ & $1$ & $(14, 6, 4)$ \\
$4$ & $2$ & $2$ & $(14, 7, 3)$ \\
$7$ & $3$ & $4$ & $(14, 8, 2)$ \\
$7$ & $1$ & $6$ & $(14, 9, 1)$ \\
$9$ & $1$ & $8$ & $(14, 10, 0)$ \\
$2$ & $1$ & $1$ & $(15, 5, 4)$ \\
$6$ & $3$ & $3$ & $(15, 6, 3)$ \\
$7$ & $3$ & $4$ & $(15, 7, 2)$ \\
$9$ & $1$ & $8$ & $(15, 8, 1)$ \\
$11$ & $1$ & $10$ & $(15, 9, 0)$ \\
$2$ & $1$ & $1$ & $(16, 4, 4)$ \\
$4$ & $1$ & $3$ & $(16, 5, 3)$ \\
$10$ & $3$ & $7$ & $(16, 6, 2)$ \\
$10$ & $1$ & $9$ & $(16, 7, 1)$ \\
$15$ & $2$ & $13$ & $(16, 8, 0)$ \\
$4$ & $1$ & $3$ & $(17, 4, 3)$ \\
$8$ & $2$ & $6$ & $(17, 5, 2)$ \\
$12$ & $1$ & $11$ & $(17, 6, 1)$ \\
$14$ & $1$ & $13$ & $(17, 7, 0)$ \\
$2$ & $0$ & $2$ & $(18, 3, 3)$ \\
$9$ & $2$ & $7$ & $(18, 4, 2)$ \\
$10$ & $0$ & $10$ & $(18, 5, 1)$ \\
$20$ & $2$ & $18$ & $(18, 6, 0)$ \\
$7$ & $1$ & $6$ & $(19, 3, 2)$ \\
$11$ & $1$ & $10$ & $(19, 4, 1)$ \\
$17$ & $1$ & $16$ & $(19, 5, 0)$ \\
$5$ & $1$ & $4$ & $(20, 2, 2)$ \\
$9$ & $0$ & $9$ & $(20, 3, 1)$ \\
$19$ & $1$ & $18$ & $(20, 4, 0)$ \\
$7$ & $0$ & $7$ & $(21, 2, 1)$ \\
$17$ & $1$ & $16$ & $(21, 3, 0)$ \\
$2$ & $0$ & $2$ & $(22, 1, 1)$ \\
$16$ & $1$ & $15$ & $(22, 2, 0)$ \\
$10$ & $0$ & $10$ & $(23, 1, 0)$ \\
$10$ & $1$ & $9$ & $(24, 0, 0)$ \\
\hline 
\multicolumn{4}{l}{$\ldots$ $\ldots$ $\ldots$}
\end{tabular}
\caption{Multiplicities in $\HC[\Omega]_{\sqsupseteq0}/\HC[\normalization{\Omega}]$ for the Fermat cubic, up to degree $8$. Column-wise grouped by degree.}
\label{aronhold-table}
\end{figure} 

It follows from the Peter-Weyl Theorem \cref{PeterWeyl} that $a_\lambda$ is the space of $H$-invariants of $\HV(\lambda)$, where $H\subseteq\GL_3$ is the stabilizer of $\thePolynomial$. It is well-known \cite[Prop.~2.4]{FundInv} that $H$ consists of permutation matrices and diagonal matrices whose diagonal entries are third roots of unity. One can obtain a matrix representation of the canonical projection $\HV(\lambda)\twoheadrightarrow\HV(\lambda)^H$ over the basis of semistandard Young tableaux (SSYT) by symmetrizing each SSYT with respect to $H$ and straightening it \cite[\textsection~7.4]{MR1464693}. The quantity $a_\lambda$ arises as the rank of this matrix. 
We have computed the values of the $m_\lambda\df a_\lambda-b_\lambda$ up to degree $8$, see \Cref{aronhold-table}. 
 
A formula for $a_\lambda$ is more involved than the one for $b_\lambda$.
Advancing methods used in \cite[Section 4.2]{MR2932001} (see also \cite[Section 5.2]{CI}), Ikenmeyer \cite{CI-Personal} determined such a formula: For $\lambda\in\Lambda$, denote by $\abs\lambda\df\lambda_1+\lambda_2+\lambda_3$ the sum of its entries. We have $a_\lambda=0$ unless $\abs\lambda=3d$ for some $d\in\HN$. In this case,
\begin{equation}
\label{formula-ikenmeyer}
a_\lambda = 
\sum_{\substack{\mu\in\Lambda\\\abs\mu=d}}~
\sum_{\substack{\nu_1\dts,\nu_d\in\Lambda,\\\abs{\nu_k}=3\cdot k\cdot\hat\mu_k\\\text{for all $k$}}}
\lrcoeff_{\nu_1\dts,\nu_d}^\lambda \cdot \prod_{k=1}^d \pleth{\hat\mu_k}{3k}{\nu_k},
\end{equation}
where $\lrcoeff_{\nu_1\dts,\nu_d}^\lambda$ denotes the multi-Littlewood-Richardson coefficient and $\hat\mu_k$ denotes the number of times that $k$ appears as an entry of $\mu$. 

\raggedright
\bibliography{biblio}{}
\bibliographystyle{alpha}
\end{document}